\documentclass{amsart}
\usepackage{amssymb,amsmath,array,multirow,makecell,blindtext,amsthm,enumitem,mathtools,amscd,tikz-cd,amsrefs,dsfont,hyperref,url,caption,float,placeins,bm}
\usepackage[font=small, labelfont=bf]{caption}
\hypersetup{
    colorlinks=true,
    linkcolor=blue,
    citecolor=black,
    filecolor=magenta,      
    urlcolor=cyan,
    %hidelinks
}
\usepackage[all]{xy}
\usepackage{graphicx}
\usetikzlibrary{positioning}
\usepackage{caption}

 \DeclareFontFamily{U}{wncy}{}
    \DeclareFontShape{U}{wncy}{m}{n}{<->wncyr10}{}
    \DeclareSymbolFont{mcy}{U}{wncy}{m}{n}
    \DeclareMathSymbol{\Sha}{\mathord}{mcy}{"58}

\urlstyle{same}
\theoremstyle{plain}
\newtheorem{theorem}{Theorem}[section]
\newtheorem{corollary}[theorem]{Corollary}
\newtheorem{lemma}[theorem]{Lemma}
\newtheorem{remark}[theorem]{Remark}
\newtheorem{proposition}[theorem]{Proposition}

%\numberwithin{theorem}{subsection}

\newcommand{\Z}{\mathbb{Z}}
\newcommand{\Q}{\mathbb{Q}}
\newcommand{\Qcyc}{\mathbb{Q}_{\text{cyc}}}
\newcommand{\R}{\mathbb{R}}
\newcommand{\C}{\mathbb{C}}

\newcommand{\F}{\mathbb{F}}

\newcommand{\Oo}{\mathcal{O}}

\newcommand{\Gal}{\operatorname{Gal}}
\newcommand{\Sel}{\operatorname{Sel}}

\newcommand{\sgn}{\operatorname{sgn}}

\newcommand{\ord}{\operatorname{ord}}

\newcommand{\nf}{\normalfont}
\newcommand{\an}{\text{\nf{an}}}

\newcommand{\e}{\varepsilon}

\newcommand{\Mod}[1]{\ \mathrm{mod}\ #1}

\newcolumntype{?}{!{\vrule width 1pt}}
\newcommand{\lb}{[\![}
\newcommand{\rb}{]\!]}

\title[A bound on the $\mu$-invariants of supersingular elliptic curves]{A bound on the $\mu$-invariants of supersingular elliptic curves}

\author[R.~Gajek-Leonard]{Rylan Gajek-Leonard}
\address[]{Department of Mathematics\\
Union College\\
Bailey Hall 206B\\
Schenectady, NY 12308\\
USA}
\email{gajekler@union.edu}

\subjclass[2020]{Primary 11R23}
\keywords{Iwasawa theory, elliptic curves, $p$-adic $L$-functions}

\begin{document}

\begin{abstract}
Let $E/\Q$ be an elliptic curve and let $p$ be a prime of good supersingular reduction. Attached to $E$ are pairs of Iwasawa invariants $\mu_p^\pm$ and $\lambda_p^\pm$ which encode arithmetic properties of $E$ along the cyclotomic $\Z_p$-extension of $\Q$. A well-known conjecture of B. Perrin-Riou and R. Pollack asserts that $\mu_p^\pm=0$.   We provide support for this conjecture by proving that for any $\ell\geq 0$, we have  $\mu_p^\pm\leq 1$ for all but finitely many primes $p$ with $\lambda_p^\pm=\ell$. Assuming a recent conjecture of D. Kundu and A. Ray, our result implies that $\mu_p^\pm\leq 1$ holds on a density 1 set of good supersingular primes for $E$.  
\end{abstract}

\maketitle

%%%%%%%%
\section{Introduction}

Let $E/\Q$ be an elliptic curve and fix a prime $p$ of good reduction. 
Attached to $E$ is the $p$-primary Selmer group $\Sel(E/\Qcyc)$, where $\Qcyc$ denotes the cyclotomic $\Z_p$-extension of $\Q$. This  group fits into the exact sequence
\begin{equation}\label{exact}
0\rightarrow E(\Qcyc)\otimes \Q_p/\Z_p\rightarrow \Sel(E/\Qcyc)\rightarrow \Sha(E/\Qcyc)\rightarrow 0,
\end{equation}
where $\Sha$ denotes the $p$-part of the Shafarevich-Tate group, 
and therefore encodes many arithmetic properties of $E$ along the cyclotomic line. 

If $p$ is a prime of ordinary reduction then $\Sel(E/\Qcyc)$ is cotorsion  as an Iwasawa module (see \cite[Theorem 17.4]{Kato}) and its characteristic ideal is therefore generated by a polynomial $L_p^{\text{alg}}\in \Z_p[T]$. The algebraic Iwasawa invariants $\lambda_p^{\text{alg}}$ and $\mu_p^{\text{alg}}$ measure the degree and $p$-divisibility of $L_p^{\text{alg}}$, respectively. 
If $E[p]$ is irreducible as a $\Gal(\bar\Q/\Q)$-module, a well-known conjecture of Greenberg \cite[Conjecture 1.11]{Greenberg99} asserts that $\mu_p^{\text{alg}}=0$.

If $p$ is a prime of supersingular reduction then $\Sel(E/\Qcyc)$ is no longer cotorsion, however Kobayashi \cite{Kobayashi} introduced signed Selmer groups $\Sel^\pm(E/\Qcyc)$ which are cotorsion and encode analogous arithmetic data. In particular, the characteristic ideals of $\Sel^\pm(E/\Qcyc)$ are generated by polynomials $L_{p,\text{alg}}^{\pm}\in \Z_p[T]$ which have associated pairs of Iwasawa invariants $\mu_{p,\text{alg}}^\pm$ and $\lambda_{p,\text{alg}}^{\pm}$. In the supersingular setting, $E[p]$ is automatically irreducible and it is similarly conjectured (see \cite[Conjecture 6.3]{PollackThesis} and \cite[Conjecture 7.1]{PR}) that $\mu_{p,\text{alg}}^\pm=0$. 

Recently, Chakravarthy \cite[Theorem 1.3]{Chakravarthy} made progress towards Greenberg's conjecture by proving that $\mu_p^{\text{alg}}\leq 1$ for all but finitely many primes of good ordinary reduction. 
In this article, we prove a similar result in the supersingular setting. 

\begin{theorem}\label{main} Let $\ell\geq 0$ and $*\in \{+,-\}$. Then $\mu_{p,\an}^*\leq 1$ for all but finitely many good supersingular primes $p$ with $\lambda_{p,\an}^*=\ell$.
\end{theorem}
 
 In the above theorem, $\lambda_{p,\text{an}}^\pm$ and $\mu_{p,\text{an}}^\pm$ denote the Iwasawa invariants attached to the \emph{analytic} $p$-adic $L$-functions $L_{p,\text{an}}^\pm \in \Z_p\lb T\rb$ defined by Pollack in \cite{PollackThesis}.
The construction of these $p$-adic $L$-functions requires $a_p=0$, which is automatically true when $p> 3$. The main conjecture of Iwasawa theory in this setting asserts that $L_{p,\text{an}}^\pm$ and $L_{p,\text{alg}}^\pm$ generate the same ideal in $\Z_p\lb T\rb$, and in particular that 
\begin{equation}\label{mcit}
\lambda_{p,\text{alg}}^{\pm}=\lambda_{p,\text{an}}^{\pm}\qquad\text{and}\qquad \mu_{p,\text{alg}}^{\pm}=\mu_{p,\text{an}}^{\pm}.
\end{equation}
Thus, Theorem \ref{main} provides support for the vanishing of $\mu_{p,\text{alg}}^\pm$. 

\begin{remark}\nf The main conjecture is known to hold in many cases: the CM case was established by Pollack and Rubin \cite{PollackRubin}, and Kobayashi \cite[Theorem 1.3]{Kobayashi} proved the containment $(L_{p,\text{an}}^\pm)\subseteq (L_{p,\text{alg}}^\pm)$ for non CM curves. A proof of the full supersingular main conjecture was recently announced by Burungale, Skinner, Tian, and Wan \cite[Theorem 1.2]{BSTW}.
\end{remark}

We henceforth assume \eqref{mcit} and write $\lambda_p^\pm$, $\mu_p^\pm$ to mean either algebraic or analytic invariants. Letting $r_E$ denote the Mordell-Weil rank of $E$, Kundu and Ray conjecture \cite[Conjecture 3.17]{KR}  that $\lambda_p^\pm=r_E$ on a density 1 set of good supersingular primes. Assuming this conjecture, the condition on $\lambda$-invariants in Theorem \ref{main} could be removed and one would have the bound $\mu_p^\pm \leq 1$ on a density 1 set of good supersingular primes for $E$. 

We remark that if $r_E=0$ then \cite[Theorem 3.8]{KR} implies that both $\lambda_p^\pm$ and $\mu_p^\pm$ vanish for all but finitely many primes $p$ (in fact, $\Sel^\pm(E/\Qcyc)=0$ for these primes). Thus, the primary contribution of Theorem \ref{main} is in providing support for the vanishing of $\mu_p^\pm$ in the positive rank case.
 We also note that, under some mild assumptions, it is known \cite[Lemma 3.3]{KR} that $\lambda_p^\pm \geq r_E$ for all good supersingular primes $p>2$, thus the cases where $\ell<r_E$ in Theorem \ref{main} are mostly vacuous.

The crux of Chakravarthy's proof in the ordinary setting is constructing a bound (which holds for all but finitely many $p$) on the size of the modular symbols defining the ordinary $p$-adic $L$-function. In the supersingular case, the signed $p$-adic $L$-functions are defined via a decomposition theorem of Pollack \cite[Theorem 5.6]{PollackThesis}, and in particular they are not as immediately understood in terms of modular symbols.
The approach taken here is to instead apply Chakravarthy's bound to the sequence of Mazur-Tate elements $\theta_n$ for $E$ (which are defined using modular symbols), where one can show (see Proposition \ref{MTbound}) that there exists an integer $n_0$ such that for all but finitely many primes $p$,
\begin{equation}\label{MTboundintro}
\mu(\theta_n)\leq 1, \quad \text{for all $n\geq n_0$.}
\end{equation}
The lower bound $n_0$ depends only on the conductor of $E$ (and not on $p$). We then relate the Iwasawa invariants of the Mazur-Tate elements to those of the signed $p$-adic $L$-functions in order to deduce Theorem \ref{main}. 

The assumption on $\lambda$-invariants in Theorem \ref{main} comes from the fact that, while one can show that $\mu(\theta_n)=\mu_p^*$ for $n$ large enough of fixed parity (see Proposition \ref{thetaLp}), in this case the lower bound on $n$ in the asymptotic depends on both $p$ and $\lambda_p^\pm$. The idea is that if we assume $\lambda_p^\pm$ does not vary with $p$ then it is possible to take $p\gg0$ so that $\mu(\theta_n)=\mu_p^*$ holds for \emph{any} fixed $n$, and in particular for the $n_0$ appearing in  \eqref{MTboundintro}.

\subsection{Acknowledgements} We are grateful to A. Chakravarthy, J. Hatley, A. Lei, and the anonymous referee for their helpful comments in the preparation of this article. 

\section{Iwasawa invariants}

Fix a prime $p$ and let $F$ be a nonzero power series in $\Lambda=\Z_p\lb T\rb$. By the Weierstrass preparation theorem \cite[Theorem 7.3]{Washington} there are unique nonnegative integers $\lambda$ and $\mu$ such that
\begin{equation}\label{wdecomp}
F=p^{\mu}DU,
\end{equation}
for some distinguished polynomial $D\in \Z_p[T]$ of degree $\lambda$ and some $U\in\Lambda^\times$. (Recall that a polynomial $D\in \Z_p[T]$ is called \emph{distinguished} if $D\equiv T^{\deg D}\Mod p$.) In terms of the coefficients of $F=\sum_{i\geq 0} a_iT^i$, we have
\begin{align*}
\mu &= \min\{\ord_p (a_i)\mid i\geq 0\},\\
\lambda &= \min \{\, i \, \mid \, \ord_p (a_i)=\mu\}.
\end{align*}

\subsection{Refined Iwasawa invariants}
Let $\Gamma\cong \Z_p$ be the Galois group of the cyclotomic $\Z_p$-extension of $\Q$ and let $\Gamma_n=\Gamma/\Gamma^{p^n}\cong\Z/p^n\Z$ denote the Galois group of its $n$th-layer.  Let $\omega_n=(1+T)^{p^n}-1$ and $\Lambda_n=\Lambda/(\omega_n)$. Fixing a topological generator $\gamma\in\Gamma$, one has isomorphisms $\Lambda\cong \Z_p\lb \Gamma\rb$ and $ \Lambda_n\cong \Z_p[\Gamma_n]$ induced by the map  $\gamma\mapsto 1+T$. \emph{Refined} Iwasawa invariants are those attached to elements of  $ \Lambda_n$. We now give two definitions of refined Iwasawa  invariants -- both useful in different contexts -- and then show that they are equivalent. 

\subsubsection{Definition via the division algorithm} \label{refined1}
 Since $\Lambda=\displaystyle\lim_{\leftarrow} \Lambda_n$, for each $n\geq 0$ there is a projection map $\pi_n:\Lambda\twoheadrightarrow \Lambda_n,\, F\mapsto F\Mod \omega_n,$
and we can define the Iwasawa invariants of $\pi_n(F)$ as follows. Since $\omega_n$ is a distinguished polynomial, the division algorithm for distinguished polynomials in $\Lambda$ allows us to write 
$$
F=\omega_nQ_n+F_n,
$$
for some unique $Q_n\in \Lambda$ and a polynomial $F_n\in \Z_p[T]$ of degree $<p^n$.
Define 
\begin{align*}
\lambda(\pi_n(F))&=\lambda(F_n),\\\mu(\pi_n(F))&=\mu(F_n).
\end{align*}

\subsubsection{Definition via augmentation ideals}\label{refined2} 

Following \cite{PollackK} and \cite{PW}, one can define the Iwasawa invariants of $\theta\in \Z_p[\Gamma_n]$ as follows. For each $n\geq 1$, the element $\gamma_n= \gamma\Mod \Gamma^{p^n}$ generates $\Gamma_n$ and we define the $\mu$-invariant of $\theta=\sum_{j=0}^{p^n-1} c_j\gamma_n^j$ by 
$$
\mu(\theta)=\min_{0\leq j\leq p^{n-1}} \ord_p(c_j).
$$
For the $\lambda$-invariant, let $\theta'=p^{-\mu(\theta)}\theta\in \Z_p[\Gamma_n]$ and let $I_n$ be the augmentation ideal of $\F_p[\Gamma_n]$. (Thus, $I_n$ is the ideal generated by the image of $\gamma_n-1$ in $\F_p[\Gamma_n]$.) Since $\theta'$ has nonzero image under the natural reduction map $\overline{(\cdot)}:\Z_p[\Gamma_n]\rightarrow \F_p[\Gamma_n]$ and all ideals of $\F_p[\Gamma_n]$ are powers of $I_n$, we can define
$$
\lambda(\theta)=\ord_{I_n}\overline{\theta'}=\max\{j \mid \overline{\theta'}\in I_n^j\}\in \{0,1,\dots,p^n-1\}.
$$
(If $n=0$ then $\theta\in \Z_p$ and we define $\mu(\theta)=\ord_p(\theta)$  and $\lambda(\theta)=0$.)

\subsubsection{Equivalence of definitions}  We now show that the definitions of refined Iwasawa invariants given above agree. 
%%%%%%%%%
\begin{proposition}\label{IwInvEquiv} Let $n\geq 0$ and $\theta\in \Z_p[\Gamma_n]$. If $F\in \Z_p[T]$ is the unique polynomial of degree $<p^n$ mapping to $\theta$ under the composition 
$$
\Lambda \twoheadrightarrow \Lambda_n\xrightarrow{\cong}\Z_p[\Gamma_n], \quad T\mapsto \gamma_n-1, 
$$
then $\lambda(\theta)=\lambda(F)$ and $\mu(\theta)=\mu(F)$.  In particular, the Iwasawa invariants defined in \S\ref{refined1} and \S\ref{refined2} agree.  
\end{proposition}
\begin{proof} The case $n=0$ is clear, so suppose $n\geq 1$. Write $\theta=p^{\mu(\theta)}\theta' $ for some $\theta'\in \Z_p[\Gamma_n]$. Let $F_{\theta'}\in \Z_p[T]$ be a representative of the image of $\theta'$ in $ \Lambda_n$, so $F\equiv p^{\mu(\theta)}F_{\theta'}\Mod \omega_n$. By the division algorithm, we can choose $F_{\theta'}$ such that $\deg F_{\theta'}<p^n$, in which case the degree of $F$ forces the equality $F=p^{\mu(\theta)}F_{\theta'}$ in $\Z_p[T]$. Since $\theta'\equiv (\gamma_n-1)^{\lambda(\theta)}\theta''\Mod p$ for some $\theta''\in \Z_p[\Gamma_n]$, the commutativity of the diagram 
$$
\label{split}
\begin{tikzcd}
\Z_p[\Gamma_{n}] \arrow[r," "] \arrow[d," "] &\Lambda_n\arrow[d," "] \\
\F_p[\Gamma_{n}]\arrow[r," "] & \F_p[T]/(\omega_n),
 \end{tikzcd}
$$
where the horizontal maps are $\gamma_n\mapsto 1+T$ and the vertical maps are reduction mod $p$, implies that $F_{\theta'}\equiv T^{\lambda(\theta)}F_{\theta''}\mod p$ for some $F_{\theta''}\in \Oo[T]$ of degree $<p^n$. Hensel's lemma \cite[\S II Lemma 4.6]{Neukirch} now gives a factorization 
$F_{\theta'}=DU$ in $\Z_p[T]$, where $D,U\in \Oo[T]$ are such that $\deg  D=\lambda(\theta)$ and $D\equiv T^{\lambda(\theta)} \Mod p$ (so $D$ is distinguished), and $U\equiv F_{\theta''}\mod p$ (so $U\in \Lambda^\times$ since the constant term of $F_{\theta''}$ does not vanish mod $p$ by maximality of $\lambda(\theta)$). It follows that $F=p^{\mu(\theta)}DU$ and by uniqueness of the Weierstrass decomposition \eqref{wdecomp}, we obtain $\mu(\theta)=\mu(F)$ and $\lambda(\theta)=\lambda(F)$.
\end{proof}

%%%%%%
\subsubsection{Relating invariants in $\Lambda$ and $\Lambda_n$}

The following lemma is known in the literature (see \cite[Remark 4.3]{PollackK}), though we outline a proof for completeness. 

\begin{lemma} \label{projprop} Let $n\geq 0$ and $F\in \Lambda$. If $\lambda(F)<p^n$ then the Iwasawa invariants of $F$ and $\pi_n(F)$ agree.
 \end{lemma}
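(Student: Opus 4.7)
The plan is to exploit the fact that $\omega_n=(1+T)^{p^n}-1$ has the shape $T^{p^n}+p\omega_n'$ for some $\omega_n'\in\Z_p[T]$ of degree less than $p^n$, which follows because $p\mid\binom{p^n}{k}$ for every $0<k<p^n$. Consequently, when we compute $F\bmod\omega_n$ via the distinguished division algorithm, the coefficients in degrees $<p^n$ are perturbed only by multiples of $p$, which is exactly what is needed to preserve the invariants.

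First I would reduce to the case $\mu(F)=0$ by writing $F=p^{\mu(F)}\tilde F$ with $\mu(\tilde F)=0$ and $\lambda(\tilde F)=\lambda(F)$. Applying the division algorithm to $\tilde F$ gives $\tilde F=\omega_n\tilde Q_n+\tilde F_n$ with $\deg \tilde F_n<p^n$, and multiplying through by $p^{\mu(F)}$ produces an expression $F=\omega_n(p^{\mu(F)}\tilde Q_n)+p^{\mu(F)}\tilde F_n$. Uniqueness of the division then forces $F_n=p^{\mu(F)}\tilde F_n$, so $\mu(F_n)=\mu(F)+\mu(\tilde F_n)$ and $\lambda(F_n)=\lambda(\tilde F_n)$. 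Thus the lemma for $\tilde F$ implies the lemma for $F$, and it suffices to handle $\mu(F)=0$.

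In the reduced case, writing $F=\sum_{i\geq 0}a_iT^i$ and $F_n=\sum_{i=0}^{p^n-1}c_iT^i$, I would use $F_n=F-\omega_nQ_n=F-(T^{p^n}+p\omega_n')Q_n$ to compare coefficients. For each $i<p^n$, the contribution from $T^{p^n}Q_n$ vanishes, so $a_i-c_i\in p\Z_p$, i.e.\ $a_i\equiv c_i\pmod{p}$. Since $\mu(F)=0$ and $\lambda(F)<p^n$ means $a_{\lambda(F)}\in\Z_p^\times$ and $a_i\in p\Z_p$ for $i<\lambda(F)$, the same congruences transfer to the $c_i$: we get $c_{\lambda(F)}\in\Z_p^\times$ and $c_i\in p\Z_p$ for $i<\lambda(F)$. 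This forces $\mu(F_n)=0$ and $\lambda(F_n)=\lambda(F)$, as required.

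There is no serious obstacle here; the argument is essentially linear once the shape $\omega_n=T^{p^n}+p\omega_n'$ and the resulting congruence $a_i\equiv c_i\pmod{p}$ are established. The main bookkeeping points — which are easy to overlook but routine — are the initial reduction to $\mu(F)=0$ via uniqueness of distinguished-polynomial division, and the fact that the hypothesis $\lambda(F)<p^n$ is precisely what ensures the critical coefficient $a_{\lambda(F)}$ survives inside the range $i<p^n$ where the mod-$p$ congruence applies.
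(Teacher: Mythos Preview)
Your proof is correct and follows essentially the same approach as the paper: both arguments reduce to $\mu(F)=0$, use that $\omega_n\equiv T^{p^n}\pmod p$, and conclude by comparing $F$ and $F_n$ modulo $p$ in degrees below $p^n$. The only cosmetic difference is that the paper phrases the final step via the Weierstrass factorization $F=(T^{\lambda(F)}+pF_0)U$ and a unit argument, whereas you compare coefficients directly; the underlying mechanism is identical.
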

\begin{proof} We may assume $\mu(F)=0$. Use the division algorithm to write \begin{equation}\label{barF}
F=\omega_nQ_n+F_n.
\end{equation}
If $\mu(F_n)$ were positive then \eqref{barF} implies 
$$
F\equiv T^{p^n}Q_n \Mod p,
$$ 
which contradicts the fact that $\lambda(F)<p^n$. Hence we must have $\mu(F_n)=0$. From \eqref{wdecomp}, we can write $F=(T^{\lambda(F)}+ pF_0)U$ for some $F_0\in \Z_p[T]$ of degree $<\lambda(F)$. Combining this decomposition with \eqref{barF} yields
$$
F_n\equiv T^{\lambda(F)}(U-T^{p^n-\lambda(F)}Q_n)\Mod p,
$$
but since $U$ is a unit and $\lambda(F)<p^n$, $U-T^{p^n-\lambda(F)}Q_n$ must also be a unit. It follows that $\lambda(F_n)=\lambda(F)$.
\end{proof}

%%%%%%%
\section{Bounding the $\mu$-invariant}

Let $E/\Q$ be an elliptic curve of conductor $N_E$ and fix a prime $p$ of good reduction such that $a_p=0$. 

\subsection{Mazur-Tate elements} Let $L_p^\pm\in \Lambda$ and $\theta_n\in \Q[T]$ denote the plus/minus $p$-adic $L$-functions and Mazur-Tate elements for $E$, as defined in \S\S 2.9 and 6.15 of \cite{PollackThesis}, respectively. The definition of both $L_p^\pm$  and  $\theta_n$ depend on a choice of complex periods $\Omega_E^\pm \in \C$. We henceforth assume  that $\Omega_E^\pm$ are $p$-cohomological periods for $E$, in the sense of \cite[\S 2.2]{PW}. The  choice of cohomological periods ensures that the coefficients of $\theta_n$ are $p$-integral (cf. \cite[Remark 2.2]{PW}), thus we can view each $\theta_n$ as an element of the localization $\Z_{(p)}[T]\subseteq \Q[T]$. 

We now relate the Iwasawa invariants of $\theta_n$ to those of $L_p^\pm$ by showing that the \emph{even} Mazur-Tate elements recover the \emph{minus}  invariants of the $p$-adic $L$-function, and vice-versa. Let $q_1=q_0=0$ and define for $n\geq 2$ the sequence
$$
q_n=\begin{cases}p^{n-1}-p^{n-2}+\cdots +p-1 & \quad\text{$n$ even,}\\
p^{n-1}-p^{n-2}+\cdots +p^2-p & \quad\text{$n$ odd.}
\end{cases}
$$
Let $\lambda_p^\pm$ and $\mu_p^\pm$ denote the Iwasawa invariant of $L_p^\pm$.

 %%%%
\begin{proposition}\label{thetaLp} 
%Let $*\in \{+,-\}$. 
If $n\geq 0$ is even (resp., odd) and $\lambda_p^{-}<p^n-q_n$ (resp., $\lambda_p^{+}<p^n-q_n$), then 
\begin{align*}
\mu(\theta_n) &= \mu_p^\pm,\\\lambda(\theta_n)&=\lambda_p^\pm +q_n,
\end{align*}
where $\pm$ is opposite the parity of $n$. 
\end{proposition}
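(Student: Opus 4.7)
The strategy is to combine Pollack's decomposition theorem for the Mazur-Tate elements (Theorem 5.6 of \cite{PollackThesis}) with Lemma \ref{projprop}. It suffices to treat the even case of $n$; the odd case is symmetric under the swap $+\leftrightarrow -$.

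Recall the distinguished polynomials
$$\omega_n^\pm = \prod \Phi_{p^m}(1+T),$$
where the product runs over $m$ of the matching parity in $[1,n]$. A direct computation using $\deg \Phi_{p^m}(1+T) = p^m - p^{m-1}$ shows $\deg \omega_n^- = q_n$ when $n$ is even, while $\deg \omega_n^+ = p^n - 1 - q_n$. Pollack's decomposition expresses $\theta_n \pmod{\omega_n}$ as a $\Lambda$-linear combination of $L_p^+$ and $L_p^-$ in which the $-$ contribution is (up to a unit) the product $L_p^-\omega_n^-$. Since $\omega_n^-$ is distinguished of degree $q_n$, multiplication by it leaves $\mu$ unchanged and adds $q_n$ to $\lambda$; hence $\mu(L_p^-\omega_n^-)=\mu^-$ and $\lambda(L_p^-\omega_n^-)=\lambda^-+q_n$.

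The hypothesis $\lambda^-<p^n-q_n$ is precisely the condition $\lambda(L_p^-\omega_n^-)<p^n$, which lets me invoke Lemma \ref{projprop} and conclude that the refined $\mu$ and $\lambda$ of the image $\pi_n(L_p^-\omega_n^-)$ in $\Lambda_n$ agree with $\mu^-$ and $\lambda^-+q_n$, respectively. Provided this $-$ contribution dominates inside $\theta_n \pmod{\omega_n}$, the proposition follows immediately, and the odd case is handled in the same way with the roles of $+$ and $-$ (and $\omega_n^\pm$) interchanged.

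The main obstacle, as I see it, is showing that the $+$-contribution to $\theta_n$ in Pollack's identity, which carries a factor of $\omega_n^+$ of the much larger degree $p^n-1-q_n$, does not perturb these invariants after reduction modulo $\omega_n$. The key input is the elementary identity $T^{p^n} \equiv p\cdot P(T)\pmod{\omega_n}$ for some $P\in\Z_p[T]$ of degree $<p^n$, which follows from $p\mid\binom{p^n}{k}$ for $0<k<p^n$. Cashing in these extra factors of $p$ when reducing modulo $\omega_n$ forces the $L_p^+\omega_n^+$ term to contribute either in strictly higher powers of $p$ than $p^{\mu^-}$ or only in $T$-adic valuations strictly exceeding $\lambda^-+q_n$, so that $L_p^-\omega_n^-$ dominates in the sum. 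Combining this with the conclusion from Lemma \ref{projprop} then gives $\mu(\theta_n)=\mu^-$ and $\lambda(\theta_n)=\lambda^-+q_n$, as desired.
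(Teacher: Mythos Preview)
Your argument rests on a two-term decomposition of $\theta_n$ modulo $\omega_n$, with a ``$-$ contribution'' $\omega_n^- L_p^-$ and a ``$+$ contribution'' $\omega_n^+ L_p^+$, followed by an attempt to show the latter is negligible. This is both the wrong input and the source of a genuine gap. The relevant result is not Theorem~5.6 of \cite{PollackThesis} (which decomposes the unbounded $p$-adic $L$-function $L_{p,\alpha}$) but Proposition~6.18, which already gives the \emph{one-term} congruence
\[
\theta_n \equiv \omega_n^{-\e_n} L_p^{-\e_n} \pmod{\omega_n},
\]
where $\e_n=\sgn((-1)^n)$. For even $n$ there is simply no $\omega_n^+ L_p^+$ term present, so your entire final paragraph is addressing a phantom obstacle.

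Conversely, if one actually had to control a second term of the shape $\omega_n^+ L_p^+$, your sketch would not suffice. The observation $T^{p^n}\equiv p\cdot P(T)\pmod{\omega_n}$ only buys a single extra factor of $p$ per reduction step; to conclude that the $+$ term is dominated you would need, at minimum, a comparison between $\mu^+$ and $\mu^-$ (nothing in the hypotheses prevents $\mu^+<\mu^-$, in which case the $+$ term could well control both invariants of the sum). The phrase ``forces the $L_p^+\omega_n^+$ term to contribute either in strictly higher powers of $p$ \ldots\ or only in $T$-adic valuations strictly exceeding $\lambda^-+q_n$'' is asserted, not proved.

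Once you replace your two-term picture with the one-term identity above, the proof collapses to exactly what you wrote in your middle paragraph: $\omega_n^{-\e_n}$ is distinguished of degree $q_n$, so $\lambda(\omega_n^{-\e_n}L_p^{-\e_n})=q_n+\lambda^{-\e_n}<p^n$ by hypothesis, and Lemma~\ref{projprop} finishes.
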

\begin{proof} This follows from the argument of \cite[page 3]{RGLthesis}, which we reproduce in brief here. Let $\e_n=\sgn(-1)^n$ denote the parity of $n$. By \cite[Proposition 6.18]{PollackThesis}, we have
 \begin{equation}\label{poleq1}
\theta_n \equiv \omega_n^{-\e_n}L_p^{-\e_n}\Mod \omega_n.
\end{equation}
Here 
$$
\omega_n^+=\prod_{\underset{\text{$i$ even}}{1\leq i\leq n}}\Phi_{p^i}(1+T)\qquad \text{and}\qquad \omega_n^-=\prod_{\underset{\text{$i$ odd}}{1\leq i\leq n}}\Phi_{p^i}(1+T),
$$
where $\Phi_{p^i}(T)$ is the $p^i$th cyclotomic polynomial. Since $\lambda(\Phi_{p^n}(1+T))=p^n-p^{n-1}$, we have $\lambda\big(\omega_{n}^{-\e_n}L_p^{-\e_n}\big)=q_n+\lambda_p^{-\e_n}.$ As the sequence $p^n-q_n$ tends to infinity, we may therefore take $n$ large enough so that $\lambda\big(\omega_{n}^{-\e_n}L_p^{-\e_n}\big)<p^n$. The result now follows from Lemma \ref{projprop}.\end{proof}

\begin{remark}\nf  The formula for $\lambda$-invariants in Proposition \ref{thetaLp} can also be found in \cite[\S 5]{PR}, \cite[Theorem 4.1]{PW}, and \cite[Corollary 8.9]{Sprung}, where it is instead deduced from a 3-term compatibility relation (see \cite[Proposition 2.5]{PW}, for example) satisfied by Mazur-Tate elements. 
\end{remark}

We now fix an integer $\ell\geq 0$ and let $X^\pm(E,\ell)$ denote the set of all good supersingular primes $p>3$ for which $\lambda_{p}^\pm=\ell$. 

\begin{remark}\nf It is conjectured \cite[Conjecture 3.17]{KR} that $\lambda_p^\pm$ coincides with the Mordell-Weil rank on a density 1 set of good supersingular primes, thus one expects $X^\pm(E,\ell)$ to have density 0 except when $\ell=r_E$. 
\end{remark}

\begin{corollary}\label{coromu} Fix $n\geq 1$. For all but finitely many $p\in X^\pm(E,\ell)$, we have  
\begin{align*}
\mu_p^\pm&=\mu(\theta_n) 
\end{align*}
 where $\pm$ is opposite the parity of $n$.
\end{corollary}
\begin{proof}
Since $\lambda_p^\pm= \ell$, we can take $p$ large enough so that $\lambda_p^\pm<p^n-q_n$.
For such $p$, Proposition \ref{thetaLp} yields the desired result.
\end{proof}

\subsection{Modular symbols}
Let $f$ be the cuspidal newform attached to $E$ via modularity. For $r\in\Q$, recall the modular symbols of \cite{MT} defined by 
$$
[r]^\pm=\frac{\pi i}{\Omega_E^\pm}\bigg(\int_{r}^{i\infty}f(z)dz\pm\int_{-r}^{i\infty}f(z)dz\bigg).
$$
For  odd primes $p$ we have by definition (see \cite[Definition 6.15]{PollackThesis})
$$
\theta_n=\sum_{a\in (\Z/p^{n+1}\Z)^\times}[a/p^{n+1}]^+\gamma_n^{\log_\gamma a}\in \Z_p[\Gamma_n].
$$
Here $0\leq \log_\gamma(a)\leq p^{n}-1$ is the unique integer for which $a\equiv \omega(a)(1+p)^{ \log_\gamma(a)}\Mod p^{n+1}$, where $\omega:(\Z/p^{n+1}\Z)^\times\rightarrow \Z_p^\times$ is the mod $p$ cyclotomic character (sending $a\Mod p^{n+1}$ to the $(p-1)$st root of unity $\omega(a)\in \Z_p^{\times}$ with $\omega(a)\equiv a\Mod p$.) Write $\bm\mu_{p-1}$ for the set of $(p-1)$st roots of unity in $\Z_p^\times$, and if $\alpha\in \Z_p$ define $[\alpha/p^{n}]^\pm=[a/p^{n}]^\pm$ where $\alpha\equiv a\Mod p^n$. 

\begin{lemma}\label{thetamu} Let $p$ be an odd prime of good reduction. For any $n\geq 0$ we have
$$
\mu(\theta_n)=\displaystyle\min_{0\leq j\leq p^{n}-1} \ord_p\bigg(\sum_{\eta\in \bm\mu_{p-1}}\bigg[\frac{\eta (1+p)^{j}}{p^{n+1}}\bigg]^+\bigg).
$$
\end{lemma}
\begin{proof}Since $\log_\gamma(a)=\log_\gamma(b)$ if and only if $a\omega(a)^{-1}=b\omega(b)^{-1}\Mod p^{n+1}$, we have 
 \begin{equation}\label{MT}
\theta_n=\sum_{j=0}^{p^n-1} \sum_{\eta\in \bm\mu_{p-1}}\bigg[\frac{\eta (1+p)^{j}}{p^{n+1}}\bigg]^+\gamma_n^{j}.
\end{equation}
The result now follows from Definition \ref{refined2}. \end{proof}

%%%%%%%%
\begin{lemma}\label{MSbound} Let $n\geq 0$ and $C\in \R$. For all but finitely many primes $p$,  if $a\in (\Z/p^{n+1}\Z)^\times$ then 
$$
\bigg|C\bigg[\frac{a}{p^{n+1}}\bigg]^+\bigg|<p. 
$$

\end{lemma}
\begin{proof}By Chakravarthy's bound \cite[Proposition 4.1]{Chakravarthy}, there are constants $c_1$ and $c_2$ depending only on the conductor of $E$ (and not $p$) such that for any $x\in \Q$, 
$$
|[x]^+|\leq c_1+c_2\log(\text{denominator}(x)).
$$
The result now follows by letting $x=a/p^{n+1}$ and taking $p$ large enough so that $c_1+c_2\log(p^{n+1})<\frac{p}{|C|}$. 
\end{proof}

%%%%%
\begin{lemma}\label{nonvanish} Let $p$ be an odd prime of good reduction. There is a constant $n_0$ depending only on $N_E$ such that if $n\geq n_0$ then
$$
\sum_{\eta\in \bm\mu_{p-1}}\bigg[\frac{\eta (1+p)^{j}}{p^{n+1}}\bigg]^+\neq0
$$
for some $0\leq j\leq p^{n}-1$. 
\end{lemma}
\begin{proof} Since $p$ is a good prime, a result of Chinta \cite[Theorem 2]{Chinta} guarantees the existence of an integer $n_0$ (depending only on $N_E$ and not on $p$) such that if $n\geq n_0$ and $\chi$ is a Dirichlet character of conductor $p^n$ then $L(E,\chi,1)\neq 0$. Let $\chi$ be an even Dirichlet character of $p$-power order and conductor $p^n$ with $n\geq n_0$. Setting $v=1+p$, we now have 
\begin{align*}
\sum_{j=0}^{p^n-1} \sum_{\eta\in \bm\mu_{p-1}}\chi(\eta v^j)\bigg[\frac{\eta v^{j}}{p^{n+1}}\bigg]^+&=\sum_{a\in (\Z/p^{n+1}\Z)^\times}\chi(a)\bigg[ \frac{a}{p^{n+1}}
\bigg]^+\\
&=\tau(\bar \chi)\frac{L(E,\chi,1)}{\Omega_E^+}\\
&\neq 0.
\end{align*}
Here $\tau(\chi)$ is a Gauss sum and the middle equality above is due to \cite[(8.6)]{MTT}. It follows that there is some $0\leq j\leq p^{n}-1$ for which 
$$
\sum_{\eta\in \bm\mu_{p-1}}\chi(\eta v^j)\bigg[\frac{\eta v^{j}}{p^{n+1}}\bigg]^+=\chi(v^j)\sum_{\eta\in \bm\mu_{p-1}}\bigg[\frac{\eta v^{j}}{p^{n+1}}\bigg]^+\neq 0,
$$
where the middle equality follows from the fact that $\chi$ has $p$-power order and $\eta$ is a $(p-1)$st root of unity. The result follows. 
\end{proof}

\subsection{Main result} We now prove our main theorem. First, we give a bound on the $\mu$-invariants of Mazur-Tate elements. 

\begin{proposition}\label{MTbound} There is a constant $n_0$ depending only on $N_E$ such that if $n\geq n_0$ then $\mu(\theta_n)\leq 1$ for all but finitely many primes $p$.
\end{proposition}
\begin{proof} Let $n_0$ be as in Lemma \ref{nonvanish} and take $n\geq n_0$, so that
\begin{equation}\label{MTcoef}
\sum_{\eta\in \bm\mu_{p-1}}\bigg[\frac{\eta (1+p)^{j}}{p^{n+1}}\bigg]^+\neq0
\end{equation}
holds for all good primes $p>2$ and some $0\leq j \leq p^{n}-1$. 
Note the sum in \eqref{MTcoef} is the $j$th coefficient of $\theta_n$ when written in the form \eqref{MT}. In particular, since $\theta_n\in \Z_{(p)}[T]$, this sum is a rational number whose denominator is $d_n$ not divisible by $p$. Thus $d_n \sum_{\eta\in \bm\mu_{p-1}}[\eta (1+p)^{j}/{p^{n+1}}]^+$ is a nonzero integer, and from Lemma \ref{MSbound} we can take $p$ large enough so that 
$$
\bigg|d_n \sum_{\eta\in \bm\mu_{p-1}}\bigg[\frac{\eta (1+p)^{j}}{p^{n+1}}\bigg]^+\bigg|\leq \sum_{\eta\in \bm\mu_{p-1}}\bigg|d_n\bigg[\frac{\eta (1+p)^{j}}{p^{n+1}}\bigg]^+\bigg|<(p-1)p<p^2.
$$
It now follows that 
$$
\ord_p\bigg(\sum_{\eta\in \bm\mu_{p-1}}\bigg[\frac{\eta (1+p)^{j}}{p^{n+1}}\bigg]^+\bigg)=\ord_p\bigg(d_n \sum_{\eta\in \bm\mu_{p-1}}\bigg[\frac{\eta (1+p)^{j}}{p^{n+1}}\bigg]^+\bigg)\leq 1.
$$
From Lemma \ref{thetamu}, we now have that $\mu(\theta_{n})\leq 1$.
\end{proof}

\begin{remark}\nf It is interesting to note that the bound in Proposition \ref{MTbound} applies to both ordinary and supersingular primes. In particular, if $p$ is an ordinary prime then by \cite[(4)]{PW} we have $\mu_p=\mu(\theta_n(f_\alpha))$ for $n\gg0$, where $\mu_p=\mu(L_{p}^{\an})$ and $f_\alpha$ is the $p$-stabilization of $f$ to level $pN$ at a root $\alpha$ of the Hecke polynomial $X^2-a_pX+p$. It is therefore tempting to try to deduce Chakravarthy's result \cite[Theorem 1.3]{Chakravarthy} that $\mu_p\leq 1$ for all but finitely many $p$ from Proposition \ref{MTbound}, however this does not immediately follow since the Iwasawa invariants of $\theta_n(E)$ and $\theta_n(f_\alpha)$ need not always agree (see \cite[Example 3.4]{PW}). 
\end{remark}

\noindent \it Proof of Theorem \ref{main}. \nf Fix $\ell\geq 0$. It suffices to show that for all but finitely many  primes $p\in X^\pm(E,\ell)$, we have $\mu_{p}^\pm\leq 1$. By Proposition \ref{MTbound}, there exists an odd integer $n^+$ and an even integer $n^-$, neither of which depends on $p$, such that  $\mu(\theta_{n^\pm})\leq 1$ holds for all but finitely many $p$.  But by Corollary \ref{coromu}, for either choice of sign $*\in \{+,-\}$ we have $\mu_p^*=\mu(\theta_{n^*})$ for all but finitely many $p\in X^*(E,\ell)$.  
\hfill $\square$


\begin{thebibliography}{99}

\bibitem{BSTW} A. Burungale, C. Skinner, Y. Tian, and X. Wan, \emph{Zeta elements for elliptic curves and applications}, preprint (2024), \href{https://arxiv.org/pdf/2409.01350}{\texttt{arXiv:2409.01350}}.

\bibitem{Chakravarthy} A. Chakravarthy, \emph{The Iwasawa $\mu$-invariants of elliptic curves over $\Q$}, preprint (2024), \href{https://arxiv.org/pdf/2408.07826}{\texttt{arXiv:2408.07826}}.


\bibitem{Chinta} G. Chinta, \emph{Analytic ranks of elliptic curves over cyclotomic fields}, Journal f\"ur die reine und angewandte mathematik (2002), 13–24.


\bibitem{RGLthesis} R. Gajek-Leonard, \emph{Iwasawa invariants of modular forms with $a_p=0$}, preprint (2023), \href{ https://arxiv.org/pdf/2302.05748}{\texttt{arXiv:2302.05748}}


\bibitem{Greenberg99}  R. Greenberg, \emph{Iwasawa theory for elliptic curves}, Arithmetic theory of elliptic curves (Cetraro, 1997), Lecture
Notes in Math., vol. 1716 (1999), Springer, Berlin, pp. 51–144. 


\bibitem{Kato} K. Kato, \emph{p-adic Hodge theory and values of zeta functions of modular forms}, Ast\'erisque, no. 295 (2004), 117–290.
 

\bibitem{KR} D. Kundu and A. Ray, \emph{Statistics for Iwasawa Invariants of Elliptic Curves}, Trans. Amer. Math. Soc., 374 (2021), no. 11, 7945–7965.


\bibitem{Kobayashi}  S. Kobayashi, \emph{Iwasawa theory for elliptic curves at supersingular primes}, Invent. Math., 152 (2003), no. 1, 1–36.


\bibitem{MT} B. Mazur, J. Tate, \emph{Refined conjectures of the ``Birch and Swinnerton-Dyer type''}, Duke Math. J., 54 (1987), no. 2, 711–750.

\bibitem{MTT} 
B. Mazur, J. Tate, J. Teitelbaum, 
\emph{On $p$-adic analogues of the conjectures of Birch and Swinnerton-Dyer}, Invent.\ Math.\ {\bf 84} (1986), no.~1, 1--48.

\bibitem{Neukirch} J. Neukirch, \emph{Algebraic Number Theory}, Springer (1999)

\bibitem{PR}  B. Perrin-Riou, \emph{Arithm\'etique des courbes elliptiques \`a r\'eduction supersinguli\`ere en $p$}, Experiment.
Math., 12 (2003), no. 2, 155–186.


\bibitem{PollackThesis} R. Pollack, \emph{On the $p$-adic $L$-function of a modular form at a supersingular prime},  Duke Math.\ J.\  118  (2003),  no.\ 3, 523--558.

\bibitem{PollackK} R. Pollack, \emph{An algebraic version of a theorem of Kurihara}, J. Number Theory, 110 (2005), pp. 164-177.


\bibitem{PollackRubin} R. Pollack and K. Rubin, \emph{The main conjecture for CM elliptic curves at supersingular primes},
Ann. of Math., 159 (2004), no. 1, 447-464.

\bibitem{PW} R. Pollack and T. Weston, \emph{Mazur-Tate elements of nonordinary modular forms}, Duke Math. J., 156 (2011), no. 3, 349–385.

\bibitem{Sprung} F. Sprung, \emph{On pairs of $p$-adic $L$-functions for weight-two modular forms}, Algebra \& Number Theory, 11 (2017), no. 4, 885–928.

\bibitem{Washington} L. C. Washington, \emph{Introduction to cyclotomic fields}, Graduate Texts in Mathematics 83, Springer, New York, 1982.


\end{thebibliography}
\end{document}